\newcommand{\A}{{\mathbb{A}}}
\newcommand{\C}{{\mathbb{C}}}
\newcommand{\F}{{\mathbb{F}}}
\newcommand{\oF}{\overline{\F}}
\newcommand{\Ge}{\mathbb{G}}
\newcommand{\Q}{{\mathbb{Q}}}
\newcommand{\oQ}{\overline{\Q}}
\newcommand{\R}{{\mathbb{R}}}
\newcommand{\Z}{{\mathbb{Z}}}
\newcommand{\ddet}{\mathrm{det}}
\newcommand{\id}{\mathrm{id}}
\newcommand{\Li}{\mathrm{Li}}
\newcommand{\Map}{\mathrm{Map}}
\newcommand{\Mor}{\mathrm{Mor}}
\newcommand{\ord}{\mathrm{ord}}
\newcommand{\rk}{\mathrm{rk}\,}
\newcommand{\spec}{\mathrm{spec}\,}
\newcommand{\tr}{\mathrm{tr}}
\newcommand{\Aut}{\mathrm{Aut}}
\newcommand{\End}{\mathrm{End}}
\newcommand{\Ext}{\mathrm{Ext}}
\newcommand{\Gal}{\mathrm{Gal}}
\newcommand{\GL}{\mathrm{GL}}
\newcommand{\Imm}{\mathrm{Im}\,}
\newcommand{\ind}{\mathrm{ind}}
\newcommand{\Ker}{\mathrm{Ker}\,}
\newcommand{\Ob}{\mathrm{Ob}\,}
\newcommand{\Perv}{\mathrm{Perv}}
\newcommand{\perv}{\mathrm{perv}}
\newcommand{\RRe}{\mathrm{Re}}
\newcommand{\nVec}{\mathrm{Vec}}
\newcommand{\Ch}{{\mathcal C}}
\newcommand{\Dh}{\mathcal{D}}
\newcommand{\Mh}{{\mathcal M}}
\newcommand{\Nh}{{\mathcal N}}
\newcommand{\Vh}{\mathcal{V}}
\newcommand{\Yh}{\mathcal{Y}}
\newcommand{\eo}{\mathfrak{o}}
\newcommand{\eX}{{\mathfrak X}}
\newcommand{\ceX}{\check{\eX}}
\newcommand{\orho}{\overline{\rho}}
\newcommand{\tvarphi}{\tilde{\varphi}}
\newcommand{\oalpha}{\overline{\alpha}}
\newcommand{\ooalpha}{\overline{\oalpha}}
\newcommand{\tV}{\tilde{V}}
\newcommand{\hzeta}{\hat{\zeta}}
\newcommand{\silo}{\xrightarrow{\,\sim\,}}
\newcommand{\colim}{\mathrm{colim}}
\newcommand{\Ind}{\mathrm{Ind}}
\newcommand{\halb}{\frac{1}{2}}
\newtheorem{theorem}{Theorem}[section]
\newtheorem{prop}[theorem]{Proposition}
\newtheorem{punkt}[theorem]{$\!\!$}
\newtheorem{prediction}[theorem]{Prediction}
\newcounter{aufz}
\newenvironment{proofof}{\noindent {\bf Proof of}}{\mbox{}\hspace*{\fill}$\Box$}
\newenvironment{proof}{\noindent {\bf Proof}}{\mbox{}\hspace*{\fill}$\Box$}
\newcommand{\verk}{\mbox{\scriptsize $\,\circ\,$}}
\def\moverlay{\mathpalette\mov@rlay}
\def\mov@rlay#1#2{\leavevmode\vtop{%
   \baselineskip\z@skip \lineskiplimit-\maxdimen
   \ialign{\hfil$\m@th#1##$\hfil\cr#2\crcr}}}
\newcommand{\charfusion}[3][\mathord]{
    #1{\ifx#1\mathop\vphantom{#2}\fi
        \mathpalette\mov@rlay{#2\cr#3}
      }
    \ifx#1\mathop\expandafter\displaylimits\fi}
\newcommand{\cupdot}{\charfusion[\mathbin]{\cup}{\cdot}}
\begin{document}
\title{Is there a Birch and Swinnerton-Dyer conjecture for Dedekind zeta functions?}
\author{Christopher Deninger\footnote{Funded by the Deutsche Forschungsgemeinschaft (DFG, German Research Foundation) under Germany's Excellence Strategy EXC 2044--390685587, Mathematics M\"unster: Dynamics--Geometry--Structure and the CRC 1442 Geometry: Deformations and Rigidity}}
\date{}
\maketitle
\centerline{\textit{Dedicated to the memory of Tobias Kreutz}}


\section{Introduction}

For an elliptic curve $E / \Q$, the Birch and Swinnerton-Dyer conjecture asserts that
\[
\rk E (\Q) = \ord_{s = 1} L (E , s) \; .
\]
There is also a prediction for the leading coefficient $L^* (E ,1)$ of the Taylor series at $s = 1$. This conjecture has inspired a huge body of work. The point $s = 1$ is the ``central point'' for the functional equation of $L (E,s)$ under the substitution $s \mapsto 2-s$. For the Dedekind zeta function $\zeta_K (s)$ of a number field $K$ the functional equation relates the values at $s$ and $1-s$ and the central point is $s = 1/2$. To this day, there is no suggestion of a group or vector space $V_K$ attached to $K$ in a natural way for which we would at least conjecturally have
\begin{equation}
\label{eq:1n}
\dim V_K = \ord_{s = 1/2} \zeta_K (s) \; .
\end{equation}
Also, there is no prediction for $\zeta^*_K (1/2)$ in the spirit of the BSD-conjecture. In the function field case the corresponding problem has been solved in the beautiful paper \cite{R}. For a non-abelian number field $K$ of degree $2^5 3$ constructed by Serre in \cite{Se}\,\S\,9, Armitage showed in \cite{A} that $\zeta_K (1/2) = 0$. In section 3 of \cite{M} a proof of Kurokawa's result is given that there exist Galois extensions $K_{\nu} / \Q$ of degree $2^{3 \nu}$ with $\ord_{s = 1/2} \zeta_{K_{\nu}} (s) \ge 2 \nu$. The idea of this proof is different from the one in \cite{A} and uses results by Brauer and Fr\"ohlich. For Galois extensions $K / \Q$ with Galois group $G$, the Dedekind zeta function factors as a product of Artin $L$-functions where $\pi$ runs over the isomorphism classes of irreducible complex representations of $G$,
\[
\zeta_K (s) = \prod_{\pi} L (\pi , s)^{\deg \pi} \; .
\]
Serre conjectured that $L (\pi , s)$ vanishes at $s = 1/2$ only if the functional equation forces it to vanish, and in this case the vanishing order should be one. More explicitely, if $\pi$ is not selfdual or if the root number $W (\pi)$ is $+1$ as for orthogonal $\pi$, \cite{FQ}, then the functional equation does not imply vanishing and Serre expects that $L (\pi , 1/2) \neq 0$. On the other hand, for symplectic $\pi$ with $W (\pi) = -1$ the functional equation implies that $L (\pi , s)$ vanishes to odd order and the conjecture says that $\ord_{s = 1/2} L (\pi , s) = 1$. In particular it would follow that $\zeta_K (1/2) \neq 0$ for abelian extensions $K / \Q$. 

 The reference \cite{O} contains numerical evidence for Serre's conjecture and may serve as a survey for this problem. 

In earlier work, we proposed a conjectural global cohomological formalism for arithmetic schemes which would exhibit the (completed) Dedekind zeta function as an alternating product of zeta-regularized ``characteristic polynomials'', see \ref{t2.4} below. The formalism predicts the existence of a functor $K \mapsto V_K$ from the category of number fields into symplectic complex vector spaces with a $*$-operator whose dimensions would equal the vanishing orders of Dedekind zeta functions at $s = 1/2$. Together with a conjecture of Serre on the vanishing order of $\zeta_K (s)$ at $s = 1/2$ we obtain the precise Prediction \ref{t2.14} below about the extra properties that a natural functor $K \mapsto V_K$ satisfying formula \eqref{eq:1n} should have. A less precise prediction was already given in \cite{D2} to which the present note is a sequel. \\
Theorem \ref{t2.15} states that abstractly functors as in Prediction \ref{t2.14} exist and Theorem \ref{t2.17} asserts that they are all isomorphic and determines their common automorphism group. The very formal proofs are given in section \ref{sec:3}. The problem remains to find a \textit{natural} candidate for the functor $V \mapsto V_K$. We hope that the precise conditions listed in Prediction \ref{t2.14} will help to discover a natural construction of $V_K$. In the last section we point out some problems with trying to use extension groups of exponential motives for this purpose. 

I would like to thank Javier Fres\'an and Peter Jossen very much for allowing me to reproduce the argument for their vanishing result, Theorem \ref{t4.1}. I am also very grateful to Javier Fres\'an and the referee for their careful reading of this note and their many detailed suggestions to improve the exposition. 
\section{Dedekind zeta functions at $s = 1 / 2$ and cohomology} \label{sec:2}

In this section we mostly recall and discuss some material from \cite{D2} in a slightly different fashion and with some extra arguments. The conclusions up to Prediction \ref{t2.14} are speculative because they depend on properties of cohomology theories which have not yet been shown to exist. At the end of the section we state two theorems that are motivated by the discussion.

For a finite extension $K / \Q$ let $H^i (\Yh_K , \Ch)$ be the conjectural complex cohomology of the arithmetic compactification $\Yh_K = \overline{\spec} \eo_K$ of $\spec \eo_K$ considered in \cite{D2} \S\,2.  It comes with an operator $\theta$ which behaves as a derivation with respect to cup product. A geometric analogue of this cohomology theory exists for a certain class of foliated $\R$-dynamical systems. There $\Ch$ denotes the sheaf of germs of smooth complex valued functions which are locally constant on the leaves, \cite{D3}. In the dynamical context, $\theta$ is the infinitesimal generator of the induced $\R$-action on cohomology. We expect the following:
\begin{punkt}\label{t2.1}\em
We have $H^i (\Yh_K , \Ch) = 0$ for $i \ge 3$. Moreover $H^0 (\Yh_K , \Ch) = \C$ with $\theta=0$ and there is a canonical $\theta$-equivariant trace isomorphism $\tr : H^2 (\Yh_K , \Ch) \to \C (-1)$ where $\C (-1) = \C$ as a vector space equipped with the endomorphism $\theta = \id$. As suggested by Baptiste Morin, there should be a cycle class map from the Arakelov Chow group of $\Yh_K$ to $H^2 (\Yh_K , \Ch)$ such that the following diagram commutes
\begin{equation} 
\label{eq:1}
\vcenter{\xymatrix{
CH^1 (\Yh_K) \ar[d]_{cl} \ar[rr]^-{\deg} & & \R \ar@{^{(}->}[d] \\
(H^2 (\Yh_K , \Ch))^{\theta = 1} \ar@{=}[r] & H^2 (\Yh_K , \Ch) \ar[r]^-{\overset{\tr}{\sim}} & \C \; .
}}
\end{equation}
In particular, $H^2 (\Yh_K , \Ch)$ has a canonical $\theta$-invariant $\R$-structure.
\end{punkt}
\begin{punkt} \label{t2.2} \em
A cup product pairing
\[
H^1 (\Yh_K , \Ch) \times H^1 (\Yh_K , \Ch) \xrightarrow{\cup} H^2 (\Yh_K , \Ch) \overset{\tr}{\silo} \C (-1) \; .
\]
For $h_1 , h_2 \in H^1 (\Yh_K , \Ch)$ it would follow that
\[
h_1 \cup h_2 = \theta (h_1 \cup h_2) = \theta h_1 \cup h_2 + h_1 \cup \theta h_2 \; .
\]
\end{punkt}
\begin{punkt} \label{t2.3} \em 
An antilinear operator $* : H^1 (\Yh_K , \Ch) \silo H^1 (\Yh_K , \Ch)$ with $*^2 = -1$ and $\theta \verk \ast = \ast \verk \theta$ and such that $\langle h , h' \rangle = \tr (h \cup *h')$ defines a hermitian scalar product on $H^1 (\Yh_K , \Ch)$. It follows that $\theta - \halb$ is skew symmetric and in particular semisimple on $H^1 (\Yh_K , \Ch)$.
\end{punkt}
\begin{punkt} \label{t2.4} \em 
The relation to the (completed) Dedekind zeta function of $K$ is given by the following formula, \cite{D0}
\[
\hzeta_K (s) = \prod^2_{i=0} \ddet_{\infty} \Big( \frac{1}{2\pi} (s- \theta) \mid H^i (\Yh_K , \Ch) \Big)^{(-1)^{i+1}} \; .
\]
By \ref{t2.1} the first order poles at $s = 0,1$ of $\hzeta_K (s)$ would be accounted for by the eigenvalues $0$ and $1$ of $\theta$ on $H^0 (\Yh_K , \Ch)$ and on $H^2 (\Yh_K , \Ch)$. For all $\rho \in \C \setminus \{ 0, 1 \}$, by the semisimplicity of $\theta$ we would have
\begin{equation} \label{eq:2n}
\ord_{s = \rho} \hzeta_K (s) = \dim_{\C} (H^1 (\Yh_K , \Ch))^{\theta = \rho} \; .
\end{equation}
Here $(\;)^{\theta = \rho}$ denotes the $\rho$-eigenspace of $\theta$. Since $\hzeta_K (s)$ is known to have infinitely many zeros, $H^1 (\Yh_K , \Ch)$ must be infinite dimensional. 
\end{punkt}
\begin{punkt} \label{t2.5} \em 
I expect the conjectural cohomology theory $H^i (\eX, \Ch)$ for (compactified) arithmetic schemes $\eX$ over $\spec \Z$ (or $\overline{\spec} \Z$) to take values in the following category $K_{\R}$ which is a refinement of the category of $\C$-vector spaces. $K_{\R}$ is a $\Z / 2$-graded version of Kottwitz's category $Kt_{\R}$ mentioned in \cite[Conjecture 9.5]{S} in the context of Weil cohomology theories for varieties over $\oF_p$. Objects of $K_{\R}$ are $\Z / 2$-graded complex vector spaces $V = V^0 \oplus V^1$ with an antilinear isomorphism $\tau$ respecting the grading and such that $\tau^2 = (-1)^{\nu}$ on $V^{\nu}$. Thus $V^0$ carries a real structure and $V^1$ carries a quaternionic structure. If $\tau (v \cup w) = \tau (v) \cup \tau (w)$ for $v \in H^i (\eX , \Ch)$ and $w \in H^j (\eX , \Ch)$, then for the homogenous parts we have
\[
H^i (\eX , \Ch)^{\nu} \cup H^j (\eX , \Ch)^{\mu} \subset H^{i+j} (\eX , \Ch)^{\nu + \mu} \; .
\]
In our case $\eX = \Yh_K$, the spaces $H^0 (\Yh_K , \Ch)$ and $H^2 (\Yh_K , \Ch)$ must have $\Z / 2$-grading zero since they are $1$-dimensional. This means that they have a canonical real structure. This is compatible with diagram \eqref{eq:1}. On $H^1 (\Yh_K , \Ch)$ we expect that $\tau = *$ and that $H^1 (\Yh_K , \Ch)$ therefore has $\Z / 2$-grading $1$. We now explain how this implies the property $\overline{\langle h , h' \rangle} = \langle h' , h \rangle$ of the expected hermitian scalar product on $H^1 (\Yh_K , \Ch)$ in \ref{t2.3}. By \eqref{eq:1} the real structure $\tau$ on $H^2 (\Yh_K , \Ch)$ is compatible with $\tr$, that is, complex conjugation on $\C$ corresponds to $\tau$ on $H^2 (\Yh_K , \Ch)$. Since $\tau = *$ on $H^1 (\Yh_K , \Ch)$ we get
\begin{align*}
\overline{\langle h, h' \rangle} & = \overline{\tr (h \cup *h')} = \tr \, \tau (h \cup * h') = \tr (\tau (h) \cup \tau (* h')) \\
& = \tr (*h \cup ** h') = - \tr (* h \cup h') = \tr (h' \cup *h) = \langle h' , h \rangle \; .
\end{align*}
\end{punkt}
The properties \ref{t2.2}, \ref{t2.3} and \ref{t2.4} imply that:
\begin{punkt} \label{t2.6} \em 
The alternating pairing 
\[
\cup_{\tr} = \tr \verk \cup : H^1 (\Yh_K , \Ch) \times H^1 (\Yh_K , \Ch) \to \C (-1)
\]
from \ref{t2.2} induces perfect pairings between the finite-dimensional eigenspaces \\
$(H^1 (\Yh_K , \Ch))^{\theta = \rho}$ and $(H^1 (\Yh_K , \Ch))^{\theta = 1-\rho}$ for all $\rho \in \C$ in accordance with the functional equation of $\hzeta_K (s)$. Since $*$ is antilinear and commutes with $\theta$, it sends $(H^1 (\Yh_K , \Ch))^{\theta = \rho}$ to $(H^1 (\Yh_K , \Ch))^{\theta = \overline{\rho}}$. Note that for eigenvalues $\rho$ of $\theta$, since $\theta - \halb$ is skew symmetric, we have $\orho - \halb = - (\rho - \halb)$ hence $\orho = 1 - \rho$, and therefore $\RRe \rho = \halb$ in accordance with the Riemann hypothesis. 
\end{punkt}
\begin{punkt} \label{t2.7} \em
For any homomorphism $\alpha : K \hookrightarrow L$ of number fields, let 
\[
f = \overline{\spec} \alpha : \Yh_L \to \Yh_K
\]
be the induced map. It gives a contravariant map $f^*$ between the Arakelov Chow groups making the following diagram commutative
\begin{equation}
\label{eq:2}
\vcenter{\xymatrix{
CH^1 (\Yh_K) \ar[d]_{f^{\ast}} \ar[r]^-{\deg} & \R \ar[d]^{[L : K]} \\
CH^1 (\Yh_L)\ar[r]^-{\deg} & \R  \; .
}}
\end{equation}
The induced homomorphism $f^*$ of algebras $H^{\bullet} (\Yh_K, \Ch)$ should commute with $\theta, *$ and with $cl$. In particular, diagram \eqref{eq:1} then gives the commutative diagram
\begin{equation}
\label{eq:3}
\vcenter{\xymatrix{
H^2 (\Yh_K , \Ch) \ar[r]^-{\tr} \ar[d]_{f^*} & \C \ar[d]^{[L : K]} \\
H^2 (\Yh_L,\Ch) \ar[r]^-{\tr} & \C \; .
}}
\end{equation}
Diagram \eqref{eq:3} implies that for every automorphism $\sigma$ of $K$ the induced action by $(\overline{\spec} \sigma)^*$ on $H^2 (\Yh_K , \Ch)$ is trivial. It follows that $\Aut (K)$ respects both the alternating pairing $\cup_{\tr} : H^1 (\Yh_K , \Ch) \times H^1 (\Yh_K , \Ch) \to \C (-1)$ in \ref{t2.6} and the scalar product $\langle , \rangle$ on $H^1 (\Yh_K , \Ch)$ in \ref{t2.3}. 
\end{punkt}
\begin{punkt} \label{t2.8} \em 
We are not specific about the precise nature of $H^1 (\Yh_K , \Ch)$ as a topological vector space. In any case we expect the direct sum of their $\theta$-eigenspaces to be dense in $H^1 (\Yh_K , \Ch)$. If we replace the cohomology groups $H^1 (\Yh_K , \Ch)$ by the direct sum of their $\theta$-eigenspaces, namely write $H^1 (\Yh_K , \Ch)$ for this direct sum, we obtain a unique map $f_* : H^1 (\Yh_L , \Ch) \to H^1 (\Yh_K , \Ch)$ dual to $f^*$ with respect to the pairing $\cup_{\tr}$ in \ref{t2.6}. By construction $f_*$ respects the eigenspaces of $\theta$ and hence it commutes with $\theta$. Consider the defining equation
\begin{equation}
\label{eq:4}
f^* (h) \cup_{\tr} h' = h \cup_{\tr} f_* (h') \; ,
\end{equation}
where $h$ and $h'$ are finite sums of eigenvectors in $H^1 (\Yh_K , \Ch)$ and $H^1 (\Yh_L , \Ch)$. Substituting $* h$ for $h$ and using that $f^*$ commutes with $*$, we get
\[
* f^* (h) \cup_{\tr} h' = * h \cup_{\tr} f_* (h') \quad \text{and hence} \quad \langle h' , f^* (h) \rangle = \langle f_* (h') , h \rangle \; .
\]
Since $\langle, \rangle$ is hermitian, this implies
\begin{equation}
\label{eq:5}
\langle f^* (h) , h' \rangle = \langle h , f_* (h') \rangle \; .
\end{equation}
It follows that if we replace $H^1 (\Yh_K , \Ch)$ with its Hilbert space completion with respect to $\langle , \rangle$, then $f_*$ is the Hilbert space adjoint of $f^*$. Note here that by diagram \eqref{eq:3} we have
\begin{align}
\langle f^* (h_1) , f^* (h_2) \rangle & = \tr f^* (h_1 \cup * h_2) = [L : K] \tr (h_1 \cup *h_2) \nonumber \\
& = [L : K] \langle h_1 , h_2 \rangle \quad \text{for} \; h_1 , h_2 \in H^1 (\Yh_K , \Ch) \; . \label{eq:6}
\end{align}
Hence $[L : K]^{- 1/2} f^* : H^1 (\Yh_K , \Ch) \to H^1 (\Yh_L , \Ch)$ is an isometry and in particular $f^*$ is bounded for the norm corresponding to $\langle , \rangle$. For the alternating pairing $\cup_{\tr}$ in \ref{t2.6} we have by the same argument
\begin{equation}
\label{eq:7}
f^* (h_1) \cup_{\tr} f^* (h_2) = [L : K] h_1 \cup_{\tr} h_2 \; .
\end{equation}
It follows from either \eqref{eq:4} + \eqref{eq:7} or \eqref{eq:5} + \eqref{eq:6} that we have $f_* f^* = [L : K]$ on $H^1 (\Yh_K , \Ch)$. 
\end{punkt}
\begin{punkt} \label{t2.9} \em 
The discussion in \ref{t2.8} implies in particular that via $\sigma \mapsto (\overline{\spec} \sigma)^*$ the group $\Aut (K)$ acts from the left on $H^1 (\Yh_K , \Ch)$ respecting both $\langle , \rangle$ and $\cup_{\tr}$, hence $\Aut (K)$ acts by isometric symplectomorphisms. Moreover, we have $(\overline{\spec} \sigma)_* = (\overline{\spec} \sigma^{-1})^*$. The $\Aut (K)$-action commutes with the endomorphism $\theta$ on $H^1 (\Yh_K , \Ch)$ and hence respects its eigenspaces. We remark that on the Hilbert space completion, the operator $\theta$ is unbounded because its eigenvalues $\rho$, the zeroes of $\hzeta_K (s)$ are unbounded.
\end{punkt}
\begin{punkt} \label{t2.10} \em 
Similar arguments as e.g. for the geometric \'etale cohomology of curves over finite fields work as well in the conjectural cohomological formalism and imply that for a Galois extension of number fields $L / K$ with group $G$, we have 
\begin{equation} \label{eq:9}
f^* f_* = \sum_{\sigma \in G} \sigma \quad \text{on} \; H^1 (\Yh_L , \Ch) \; .
\end{equation}
Here we have written $\sigma$ for the action by $(\overline{\spec} \sigma)^*$ on $H^1 (\Yh_L , \Ch)$.
\end{punkt}
\begin{punkt} \label{t2.12} \em 
Consider the $\halb$-eigenspace $(H^1 (\Yh_K , \Ch))^{\theta = 1/2}$. Serre's conjecture from the introduction and formula \eqref{eq:2n} imply that for $K / \Q$ Galois, we should have 
\begin{equation}
\label{eq:10}
\dim H^1 (\Yh_K , \Ch)^{\theta = 1/2} = \sum_{\pi \, \text{sympl} \atop W (\pi) = -1} \deg \pi \; .
\end{equation}
Here $\pi$ runs over the symplectic irreducible representations of $G = \Gal (K / \Q)$ with $W (\pi) = -1$. In fact, a somewhat more involved cohomological argument, \cite{D2} 2.13 and equation \eqref{eq:10} imply the following more precise assertion. As $\C [G]$-modules we have
\begin{align*}
H^1 (\Yh_K , \Ch)^{\theta = 1/2} & = \bigoplus_{\pi \, \text{sympl} \atop W (\pi) = -1} H^1 (\Yh_K , \Ch)^{\theta = 1/2}  (\pi) \quad \text{and}\\
\dim H^1 (\Yh_K , \Ch)^{\theta = 1/2} (\pi) & = \deg\pi \; .
\end{align*}
In particular the $\pi$-isotypical components $H^1 (\Yh_K , \Ch)^{\theta = 1/2} (\pi)$ appear with multiplicity one in $H^1 (\Yh_K , \Ch)^{\theta = 1/2}$. 
\end{punkt}
In the following, for simplicity an antilinear endomorphism $*$ of a $\C$-vector space with $*^2 = -1$ will be called a star operator.

\begin{punkt} \label{t2.13} \em
Apart from the conjectural infinite dimensional cohomology theory $H^i (\Yh_K , \Ch)$ with operator $\theta$, there should also be Arakelov motivic cohomology groups $H^i_{\Mh} (\Yh_K , n/2)$ for half integer twists together with regulator maps
\[
r_{i,n} : H^i_{\Mh} (\Yh_K , n / 2) \longrightarrow H^i (\Yh_K , \Ch)^{\theta = n / 2} \; .
\]
For even $n$ such groups have been defined and studied in \cite{Schol}, \cite{HSchol} also in higher dimensions. For $i = 2 = n$ we have
\[
H^2_{\Mh} (\Yh_K , 1) = CH^1 (\Yh_K) \; .
\]
Via this identification $r_{2,2}$ should be the map $cl$ in \eqref{eq:1}. We expect a commutative diagram
\[
\xymatrix{
H^1_{\Mh} (\Yh_K , 1/2) \times H^1_{\Mh} (\Yh_K , 1/2) \ar[d]^{r_{1,1} \times r_{1,1}} \ar[r]^-{\cup} & H^2_{\Mh} (\Yh_K , 1) \ar[d]^{r_{2,2}} \ar@{=}[r] & CH^1 (\Yh_K) \ar[r]^-{\deg} & \R \ar@{_{(}->}[d] \\
H^1 (\Yh_K , \Ch)^{\theta = 1/2} \times H^1 (\Yh_K , \Ch)^{\theta = 1/2} \ar[r]^-{\cup} & H^2 (\Yh_K , \Ch)^{\theta = 1} \ar[rr]^-{\tr} && \C \; .
}
\]
There should also be Arakelov motivic cohomology $\C$-vector spaces with half integer twists $H^i_{\Mh} (\Yh_K , \C (n/2))$ and a factorization of the regulator map $r_{i,n}$ as follows
\[
r_{i,n} : H^i_{\Mh} (\Yh_K , n/2) \longrightarrow H^i_{\Mh} (\Yh_K , \C (n/2)) \longrightarrow H^i (\Yh_K , \Ch)^{\theta = n/2} \; .
\]
For even $n$, the vector spaces $H^i_{\Mh} (\Yh_K , \C (n/2))$ exist even for higher dimensional arithmetic schemes as the complexification of the real Arakelov motivic cohomology vector spaces studied in \cite{Schol}. We expect that the regulator map
\[
H^1_{\Mh} (\Yh_K , \C (1/2)) \silo H^1 (\Yh_K , \Ch)^{\theta = 1/2}
\]
is an isomorphism. 
\end{punkt}

A Birch and Swinnerton-Dyer conjecture for the Dedekind zeta function would consist in finding $H^1_{\Mh} (\Yh_K , 1/2)$ and its cousin $H^1_{\Mh} (\Yh_K , \C (1/2))$ and use them to describe the vanishing order of $\zeta_K (s)$ at $s = 1/2$ (as the dimension of $H^1_{\Mh} (\Yh_K , \C (1/2))$) and the leading coefficient $\zeta^*_K (1/2)$ in the Taylor expansion of $\zeta_K (s)$ at $s = 1/2$. For $\zeta^*_K (1/2)$ the pairing $\cup$ on $H^1_{\Mh} (\Yh_K , 1/2)$ might be involved as well. Like for Arakelov motivic cohomology for even twists, any definition of $H^1_{\Mh} (\Yh_K , 1/2)$ or $H^1_{\Mh} (\Yh_K , \C (1/2))$ should involve a mixture of number theory and analysis. In section \ref{sec:4} we discuss exponential motives in this context. 

The cohomological considerations in \ref{t2.1}--\ref{t2.12} together with Serre's vanishing conjecture suggest the following Prediction \ref{t2.14}, where $\Nh$ is the  category of number fields with ring homomorphisms $\alpha : K \to L$ as morphisms and where we set $\alpha_* = f^*$ and $\alpha^* = f_*$ for $f = \overline{\spec} \alpha$. We apologize for listing the above properties of the groups $H^1 (\Yh_K , \Ch)^{\theta = 1/2}$ again for the groups $H^1_{\Mh} (\Yh_K , \C (1/2))$ which should be isomorphic. Neither of these cohomology theories has yet been defined but we think of them as being of a very different nature. In the analogous situation for an elliptic curve $E / \Q$ and the central points $s = 1$ the analogue of $H^1_{\Mh} (\Yh_K , \C (1/2))$ is $CH^1 (E)^0 \otimes \C = E (\Q) \otimes \C$ and the scalar hermitian product is the positive definite version of the height pairing. However no analogue of $H^1 (\Yh_K , \Ch)$ for elliptic curves has been constructed. 

By a co- and contravariant functor $F$ from a category $\Ch$ to a category $\Dh$ we mean the following:\\
1) A map $F : \Ob (\Ch) \to \Ob (\Dh)$.\\
2) For all objects $X$ and $Y$ of $\Ch$, maps $F_* = F_* (X,Y) : \Mor_{\Ch} (X,Y) \to \Mor_{\Dh} (F (X) , F (Y))$ such that $(F , F_*)$ is a covariant functor.\\
3) For all objects $X$ and $Y$ of $\Ch$, maps $F^* = F^* (X,Y) : \Mor_{\Ch} (X,Y) \to \Mor_{\Dh} (F (Y) , F (X))$ such that $(F , F^*)$ is a contravariant functor.

\begin{prediction} \label{t2.14} \em 
There is a co- and contravariant ``motivic cohomology'' functor from the category of number fields $\Nh$ to the category $\nVec_{\C}$ of finite-dimensional $\C$-vector spaces
\[
K \longmapsto H^1_{\Mh} (\Yh_K , \C (1/2)) \; , \; \alpha \longmapsto \alpha_* \; , \; \alpha^*
\]
with the following properties:\\
1) $\alpha^* \alpha_* = [L : K]$ on $H^1_{\Mh} (\Yh_K , \C (1/2))$ for $\alpha : K \hookrightarrow L$\\
2) If $L / K$ is Galois with group $G$, then
\[
\alpha_* \alpha^* = \sum_{\sigma \in G} \sigma_* \quad \text{on} \; H^1_{\Mh} (\Yh_L , \C (1/2)) \; \text{for any} \; \alpha : K \hookrightarrow L \; .
\]
3) If $K / \Q$ is Galois with group $G$, then we have as $\C [G]$-modules
\[
H^1_{\Mh} (\Yh_K , \C (1/2)) = \bigoplus_{\pi \, \text{sympl} \atop W (\pi) = -1} H^1_{\Mh} (\Yh_K , \C (1/2)) (\pi)
\]
and $\dim H^1_{\Mh} (\Yh_K , \C (1/2)) (\pi) = \deg \pi$. \\
Here $\pi$ runs over the isomorphism classes of complex irreducible symplectic representations of $G$. In particular
\[
\dim H^1_{\Mh} (\Yh_K , \C (1/2)) = \sum_{\pi \, \text{sympl} \atop W (\pi) = - 1} \deg \pi \; .
\]
4) The vector space $H^1_{\Mh} (\Yh_K  , \C (1/2))$ carries a symplectic pairing
\[
\cup_{\tr} : H^1_{\Mh} (\Yh_K , \C (1/2)) \times H^1_{\Mh} (\Yh_K , \C (1/2)) \longrightarrow \C
\]
and an antilinear operator $*$ with $*^2 = -1$ such that
\[
\langle h_1 , h_2 \rangle = h_1 \cup_{\tr} (* h_2)
\]
defines a hermitian scalar product on $H^1_{\Mh} (\Yh_K , \C (1/2))$. \\
5) For $\alpha : K \hookrightarrow L$ we have\\
a) $\alpha_* \verk * = * \verk \alpha_* : H^1_{\Mh} (\Yh_K , \C (1/2)) \to H^1_{\Mh} (\Yh_L , \C (1/2))$\\
b) $\alpha^*$ is adjoint to $\alpha_*$ via $\cup_{\tr}$ and hence via $\langle , \rangle$, that is
\[
\alpha_* h \cup_{\tr} h' = h \cup_{\tr} \alpha^* h' \quad \text{and} \quad \langle \alpha_* h , h' \rangle = \langle h , \alpha^* h' \rangle
\]
for $h \in H^1_{\Mh} (\Yh_K , \C (1/2))$ and $h' \in H^1_{\Mh} (\Yh_L , \C (1/2))$. \\
Note that the condition that the scalar product in 4) is hermitian is equivalent to the formula
\[
*h_1 \cup_{\tr} *h_2 = \overline{h_1 \cup_{\tr} h_2} \quad \text{for} \; h_1 , h_2 \in H^1_{\Mh} (\Yh_K , \C (1/2)) \; .
\]
c) $\alpha_* (h_1) \cup_{\tr} \alpha_* (h_2) = [L : K] h_1 \cup_{\tr} h_2$ and hence $\langle \alpha_* (h_1) , \alpha_* (h_2) \rangle  = [L : K] \langle h_1 , h_2 \rangle$. \\
\end{prediction}

\begin{punkt} \label{t2.16} \em
Serre's vanishing conjecture which a priori has nothing to do with cohomology fits very well with the expected existence of a cup-product on $H^1 (\Yh_K , \Ch)$ and hence on 
\[
H^1_{\Mh} (\Yh_K , \C (1/2)) \cong H^1 (\Yh_K , \Ch)^{\theta = 1/2} \; .
\]
Namely, as noted in \cite{D2} \S\,2.13, since the $H^1_{\Mh} (\Yh_K , \C (1/2)) (\pi)$'s are irreducible, self-dual and pairwise non-isomorphic the restriction of 
\[
\cup_{\tr} : H^1_{\Mh} (\Yh_K , \C (1/2)) \times H^1_{\Mh} (\Yh_K , \C (1/2)) \to \C
\]
to $H^1_{\Mh} (\Yh_K , \C (1/2)) (\pi) \times H^1_{\Mh} (\Yh_K , \C (1/2)) (\pi)$ must be the (up to scalar) unique $G$-invariant symplectic pairing on $H^1_{\Mh} (\Yh_K , \C (1/2)) (\pi)$. Hence given 1), 2), 4), 5) we can phrase 3) equivalently as follows:\\
3') If $K / \Q$ is Galois with group $G$ consider the action of $\sigma \in G$ by the symplectic isomorphism $\sigma_*$ on $(H^1_{\Mh} (\Yh_K , \C (1/2)) , \cup_{\tr})$. The canonical decomposition of $H^1_{\Mh} (\Yh_K , \C (1/2))$ into isotypical components $H^1_{\Mh} (\Yh_K , \C (1/2)) (\pi)$ for the $G$-action has the following properties:\\
a) The irreducible complex representations of $G$ that occur in $H^1_{\Mh} (\Yh_K , \C (1/2))$ have multiplicity one, and hence
\[
\dim H^1_{\Mh} (\Yh_K , \C (1/2)) (\pi) = \deg \pi \quad \text{if} \; H^1_{\Mh} (\Yh_K , \C (1/2)) (\pi) \neq 0 \; .
\]
b) The restriction of the $G$-invariant pairing $\cup_{\tr}$ on $H^1_{\Mh} (\Yh_K , \C (1/2))$ to each non-zero isotypical component $H^1_{\Mh} (\Yh_K , \C (1/2)) (\pi)$ is non-degenerate. In particular, only symplectic $\pi$ appear in $H^1_{\Mh} (\Yh_K , \C (1/2))$.\\
c) The root number of each $\pi$ appearing in $H^1_{\Mh} (\Yh_K , \C (1/2))$ is $W (\pi) = -1$. 
\end{punkt}

A priori it is not clear that a co- and contravariant functor with properties as in the prediction exists. However, because of the multiplicity one condition in 3) the situation is quite rigid and we can prove existence and essential uniqueness of functors as in the prediction. To do so it is convenient to rescale $\cup_{\tr}$ on $H^1_{\Mh} (\Yh_K , \C (1/2))$ by setting 
\[
\cup_K = [K : \Q]^{-1} \cup_{\tr} \; .
\]
Then $\alpha_*$ respects the rescaled symplectic pairings. Moreover $[L : K]^{-1} \alpha^*$ is adjoint to $\alpha_*$. Let $\nVec^{\sharp}_{\C}$ be the category of finite dimensional $\C$-vector spaces $W$ with a symplectic pairing $\cup : W \times W \to \C$ and a star operator $*$ such that the formula $\langle w, w' \rangle = w \cup * w'$ defines a hermitian scalar product on $W$. The property $\langle \overline{w , w'} \rangle = \langle w' , w \rangle$ for $w , w' \in W$ is equivalent to the relation $\overline{w_1 \cup w_2} = * w_1 \cup *w_2$ for $w_1 , w_2 \in W$. The morphisms in $\nVec^{\sharp}_{\C}$ are $\C$-linear maps $\varphi : W \to W'$ which respect $\cup$ and commute with $*$. In particular they are injective and isometric. Let $\tvarphi : V' \to V$ be the adjoint of $\varphi$ with respect to the symplectic pairings on $V$ and $V'$ or equivalently with respect to the scalar products. Prediction \ref{t2.14} is equivalent to the conjectural functor 
\[
\Nh \longrightarrow \nVec^{\sharp}_{\C} \; , \; K \longmapsto (H^1_{\Mh} (\Yh_K , \C (1/2)) , \cup_K , *) \; , \; \alpha \longmapsto \alpha_*
\]
satisfying the conditions on $V$ in the following result.

\begin{theorem}
\label{t2.15}
There exists a covariant functor
\[
V : \Nh \longrightarrow \nVec^{\sharp}_{\C} \; , \; K \longmapsto V (K) = (V_K , \cup_K , *_K) \; , \; \alpha \longmapsto V (\alpha)
\]
with the following properties, where $\tV (\alpha)$ is the adjoint of $V (\alpha)$\\
1) $\tV (\alpha) V (\alpha) = \id$ on $V_K$ for $\alpha : K \hookrightarrow L$.\\
2) If $L / K$ is Galois with group $G$, then
\[
V (\alpha) \tV (\alpha) = \frac{1}{[L : K]} \sum_{\sigma \in G} V (\sigma) \; \text{on} \; V_L \; \text{for any} \; \alpha : K \hookrightarrow L \; .
\]
3) If $K / \Q$ is Galois with group $G$, then as $\C [G]$-modules
\begin{equation} \label{eq:11n}
V_K = \bigoplus_{\pi \, \text{sympl}\atop W (\pi) = -1} V_K (\pi) \quad \text{and} \quad \dim V_K = \deg \pi \; .
\end{equation}
\end{theorem}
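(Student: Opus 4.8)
The plan is to build the functor $V$ directly from the irreducible Artin representations; the multiplicity-one condition in 3) makes the choice essentially forced. Fix an algebraic closure $\oQ$ of $\Q$, put $\Gamma = \Gal (\oQ / \Q)$, and let $S$ be the set of isomorphism classes of irreducible complex representations $\pi$ of $\Gamma$ with open kernel which are symplectic and have root number $W (\pi) = -1$. For each $\pi \in S$ the representation space $V_\pi$ carries a $\Gamma$-invariant quaternionic structure $J_\pi$ (a $\C$-antilinear automorphism with $J_\pi^2 = -1$); picking a $\Gamma$-invariant Hermitian metric $h_\pi$ with $h_\pi (J_\pi v , J_\pi w) = \overline{h_\pi (v , w)}$ (possible by averaging) and setting $\omega_\pi (v , w) = h_\pi (v , J_\pi w)$ and $*_\pi = - J_\pi$ turns $(V_\pi , \omega_\pi , *_\pi)$ into an object of $\nVec^{\sharp}_{\C}$ with associated scalar product $h_\pi$, on which $\Gamma$ acts by automorphisms; fix these data once and for all. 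For a number field $K$ let $T_K = \Hom (K , \oQ)$, a finite $\Gamma$-set of size $[K : \Q]$ with the post-composition action, let $b_K$ be the $\Gamma$-invariant symmetric form on $\C [T_K]$ for which $T_K$ is orthonormal, and $c_K$ the associated conjugation. Define
\[
V_K = \bigoplus_{\pi \in S} \bigl( V_\pi \otimes_{\C} \C [T_K] \bigr)^{\Gamma} , \qquad \cup_K = \bigoplus_{\pi} (\omega_\pi \otimes b_K) , \qquad *_K = \bigoplus_{\pi} (*_\pi \otimes c_K) ,
\]
all restricted to $\Gamma$-invariants. Since a $\Gamma$-invariant non-degenerate form pairs the trivial-isotypic component with itself (as $\mathbf{1}$ is self-dual) and a positive Hermitian form stays positive on any subspace, $(V_K , \cup_K , *_K) \in \nVec^{\sharp}_{\C}$; it is finite dimensional because $\bigl( V_\pi \otimes \C [T_K] \bigr)^{\Gamma}$ — which after choosing an embedding $K \hookrightarrow \oQ$ is just $V_\pi^{\Gamma_K}$, $\Gamma_K = \Gal (\oQ / K)$ — vanishes for all but the finitely many $\pi$ occurring in $\C [T_K]$.

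On morphisms, a map $\alpha : K \to L$ of $\Nh$ induces the $\Gamma$-equivariant precomposition map $\alpha^{\vee} : T_L \to T_K$; let $t_\alpha : \C [T_K] \to \C [T_L]$ be the transfer $[x] \mapsto \sum_{\alpha^{\vee} (y) = x} [y]$, and set $V (\alpha) = [L : K]^{-1/2}$ times the map induced on $\Gamma$-invariants by $\bigoplus_{\pi} (\id_{V_\pi} \otimes t_\alpha)$. Functoriality is clear: transfers compose, $t_{\beta \alpha} = t_\beta \verk t_\alpha$, and the normalizing scalars multiply. The matrix of $t_\alpha$ in the bases $T_K , T_L$ has entries in $\{0,1\}$, so $V (\alpha)$ commutes with the conjugations and hence with $*$; and $t_\alpha$ multiplies $b_K$ by $[L : K]$ — every fibre of $\alpha^{\vee}$ has $[L : K]$ elements — which the scalar cancels, so $V (\alpha)$ respects $\cup$. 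Thus $V (\alpha)$ is a morphism of $\nVec^{\sharp}_{\C}$, in particular an isometric embedding, and property 1) follows for free: $\tV (\alpha) V (\alpha) = \id$.

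For 2), the adjoint $\tV (\alpha)$ (with respect to the scalar products) is $[L : K]^{-1/2}$ times the map induced by $\bigoplus_\pi (\id_{V_\pi} \otimes p_\alpha)$, where $p_\alpha : \C [T_L] \to \C [T_K]$, $[y] \mapsto [\alpha^{\vee} (y)]$, is the transpose of $t_\alpha$; so $V (\alpha) \tV (\alpha)$ is $[L : K]^{-1}$ times the map induced by $\bigoplus_\pi \bigl( \id_{V_\pi} \otimes (t_\alpha \verk p_\alpha) \bigr)$. If $L / K$ is Galois with group $G$ then the fibre of $\alpha^{\vee}$ through $y \in T_L$ is exactly $\{\, y \verk \sigma : \sigma \in G \,\}$ (freely), so $(t_\alpha \verk p_\alpha)([y]) = \sum_{\sigma \in G} [y \verk \sigma]$; on the other hand $V$ sends the automorphism $\sigma : L \to L$ fixing $K$ to the operator acting on each summand by $[y] \mapsto [y \verk \sigma^{-1}]$, and since $\sum_{\sigma} [y \verk \sigma^{-1}] = \sum_{\sigma} [y \verk \sigma]$ we get $\sum_{\sigma \in G} V (\sigma) = \bigoplus_\pi (\id \otimes (t_\alpha \verk p_\alpha))$ on invariants, hence $V (\alpha) \tV (\alpha) = [L : K]^{-1} \sum_{\sigma \in G} V (\sigma)$. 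For 3), if $K / \Q$ is Galois with group $G$ then $T_K$ is the $\Gamma$-set $G$ with left translation through $\Gamma \twoheadrightarrow G$, so $\C [T_K]$ is the regular representation $\C [G]$, on which the $V (\sigma)$ act by right translation; hence $\bigl( V_\pi \otimes \C [T_K] \bigr)^{\Gamma}$ is zero unless $\pi$ factors through $G$ — these being exactly the symplectic irreducibles of $G$ with $W (\pi) = -1$, since both invariants are preserved under inflation — and for such $\pi$ it is isomorphic as a $\C [G]$-module to $V_\pi$ by Schur's lemma and self-duality of $\pi$; reindexing gives $V_K = \bigoplus_\pi V_K (\pi)$ with $\dim V_K (\pi) = \deg \pi$.

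Two points carry the content. The first is the existence of the compatible triple $(V_\pi , \omega_\pi , *_\pi)$: this is the only place where the hypothesis ``$\pi$ symplectic'' is used, and it amounts to the classical fact that a symplectic representation carries an invariant quaternionic structure — the root number condition merely selects which $\pi$ enter $S$ and plays no structural role here. The second, and the real computational heart of 1) and 2), is the identity $t_\alpha \verk p_\alpha = \sum_{\sigma \in G} (\,[y] \mapsto [y \verk \sigma]\,)$ together with the bookkeeping identification of $V (\sigma)$ with the corresponding translation operator, i.e.\ checking that the transfer/push-forward formalism on permutation modules reproduces exactly the Galois-descent relations. I expect the only genuine friction to be organisational rather than conceptual: making the construction manifestly functorial in $K$ independently of the auxiliary embeddings into $\oQ$, which is precisely what the permutation-module formulation above is designed to handle.
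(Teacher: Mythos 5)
Your proof is correct, and it follows a genuinely different organizational route from the paper's. The paper passes first to the equivalent category $\Nh_e$ of subfields $K\subset\oQ$, fixes $V_{\oQ}=\bigoplus_\pi V_\pi$ with a $G_\Q$-invariant symplectic form and star operator, and defines $V(K)=V_{\oQ}^{G_K}$ with $V(\alpha)$ given by restricting a chosen lift $\overline{\alpha}\in G_\Q$; functoriality in $\Nh$ then comes from a quasi-inverse of $\Nh_e\to\Nh$. You instead work directly over $\Nh$ via the permutation modules $\C[T_K]$, $T_K=\Hom(K,\oQ)$, setting $V_K=\bigoplus_\pi (V_\pi\otimes\C[T_K])^\Gamma$, and use transfer/pushforward maps on $\C[T_\bullet]$ normalized by $[L:K]^{-1/2}$. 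This makes functoriality manifest without choosing embeddings or quasi-inverses (the two constructions give objects of $\nVec^{\sharp}_{\C}$ whose forms differ by the real scalar $[K:\Q]$, hence are isomorphic), and properties 1)–2) reduce to the clean identity $t_\alpha\circ p_\alpha=\sum_{\sigma\in G}(\text{right translation by }\sigma)$ on $\C[T_L]$ rather than the paper's reduction to the case where $\alpha$ is the inclusion of an honest subfield. The common core is the same: the existence of a $G_\Q$-invariant star operator on each irreducible symplectic $V_\pi$ compatible with the chosen symplectic form. You obtain this by averaging a hermitian metric to make it $J_\pi$-compatible and setting $*_\pi=-J_\pi$, while the paper's Proposition~\ref{t3.1} proves both existence and \emph{uniqueness} of $*_\pi$ given $\cup_\pi$; the uniqueness is not needed for Theorem~\ref{t2.15} (only later, for Theorem~\ref{t2.17}), so its omission here is harmless. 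One small point worth stating explicitly, which you use tacitly: in $\nVec^{\sharp}_{\C}$ the adjoint $\tV(\alpha)$ with respect to $\cup$ agrees with the Hilbert adjoint with respect to $\langle\,,\,\rangle$, because morphisms commute with $*$ and the image of a morphism is a $*$-stable nondegenerate subspace; you compute $\tV(\alpha)$ as the Hilbert adjoint, while the paper defines it via $\cup$.
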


Let $\oQ \subset \C$ be the algebraic closure of $\Q$ in $\C$ and let $G_{\Q} = \Aut (\oQ)$ be the absolute Galois group of $\Q$. Let $\Pi$ be the set of isomorphism classes of irreducible symplectic continuous representations $\pi$ of $G_{\Q}$ on finite dimensional $\C$-vector spaces with $W (\pi) = -1$. Consider the group $\Map ( \Pi , \mu_2)$ of maps from the set $\Pi$ to $\mu_2 = \{ \pm 1 \}$. 

\begin{theorem}
\label{t2.17}
1) Any two functors $V : \Nh \to \nVec^{\sharp}_{\C}$ in Theorem \ref{t2.15} are isomorphic.\\
2) The automorphism group of any $V$ is isomorphic to $\Map (\Pi , \mu_2)$.
\end{theorem}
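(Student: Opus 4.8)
The plan is to show that a functor $V$ as in Theorem~\ref{t2.15} is rigidly determined, up to sign ambiguities on its $\pi$-isotypic pieces, by a single $G_\Q$-representation carrying the structures of $\nVec^\sharp_\C$ at every finite level, and then to extract both statements from this.

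The first ingredient is a rigidity lemma for symplectic irreducibles, proved by Schur's lemma: if $G$ is a finite group and $\pi$ an irreducible complex representation of symplectic type, then up to isomorphism there is a unique object $(W,\cup,*)$ of $\nVec^\sharp_\C$ carrying a $G$-action through morphisms of $\nVec^\sharp_\C$ with $W\cong\pi$ as $\C[G]$-modules, and its group of $G$-equivariant $\nVec^\sharp_\C$-automorphisms is $\{\pm1\}$. Existence: take a $G$-invariant inner product $\langle\,,\rangle$ and a $G$-equivariant quaternionic structure $*$ (one exists since $\pi$ is symplectic, and it is automatically anti-unitary for $\langle\,,\rangle$ because the invariant inner product is unique up to a positive scalar), and put $\cup(x,y)=-\langle x,*y\rangle$; one checks this is a $G$-invariant symplectic form with the compatibilities required by $\nVec^\sharp_\C$. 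Uniqueness and the automorphism count: given two such objects, Schur produces a $G$-linear isomorphism $f$ which one rescales to be unitary; then $f*f^{-1}=\mu*'$ with $|\mu|=1$, and replacing $f$ by $\mu^{-1/2}f$ (still unitary) makes $f$ commute with the star operators, hence preserve $\cup$; the residual freedom is scaling by $\pm1$. The root number is irrelevant here; by property~3 it only controls which $\pi$ occur.

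Next I would reconstruct $V$ from a $G_\Q$-representation. For $\alpha\colon K\hookrightarrow L$ with $L/K$ Galois of group $H$, property~1 makes $V(\alpha)$ an isometric embedding and $V(\alpha)\tV(\alpha)$ a self-adjoint idempotent, i.e.\ the orthogonal projector onto the image of $V(\alpha)$; property~2 identifies it with $\frac1{|H|}\sum_{\sigma\in H}V(\sigma)$, the projector onto $V_L^H$ (here $\sigma\mapsto V(\sigma)$ is an $H$-representation by functoriality). So $V(\alpha)\colon V_K\to V_L^H$ is an isomorphism of $\nVec^\sharp_\C$-objects, compatible with restriction of field automorphisms when $K$ and $L$ are Galois over $\Q$. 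Carrying this out over the directed set $\mathcal L$ of finite Galois subextensions of $\oQ/\Q$ --- which is countable --- yields a smooth $G_\Q$-representation $V_\infty:=\varinjlim_{L\in\mathcal L}V_L$ with compatible $\cup$ and $*$; by property~3, $V_\infty\cong\bigoplus_{\pi\in\{\pi\}}W_\pi$ as a $G_\Q$-module, with each $W_\pi$ of multiplicity one, and one verifies $V_K\cong V_\infty^{G_K}$ for every number field $K$ (after fixing embeddings $K\hookrightarrow\oQ$, $G_K\subset G_\Q$ being the subgroup fixing $K$), with $V(\alpha)$ implemented by any lift $\bar\alpha\in G_\Q$ of $\alpha$. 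Thus $V$ is recovered functorially from the structured $G_\Q$-representation $V_\infty$. I expect this step to be the main obstacle: getting from the bare axioms 1)--3) to the statement that $V$ \emph{is} the functor $K\mapsto V_\infty^{G_K}$ requires checking that the isomorphisms $V_K\cong V_L^H$ patch coherently as $K$ and $L$ vary and that they behave correctly on morphisms of $\Nh$ not of the form ``inclusion into a Galois extension of $\Q$''.

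Given the reconstruction, parts 1) and 2) are formal. For 1): for two such functors $V,V'$, the rigidity lemma applied isotypic piece by isotypic piece (the $W_\pi$ being pairwise non-isomorphic) shows the set $X_L$ of $\Gal(L/\Q)$-equivariant $\nVec^\sharp_\C$-isomorphisms $V_L\to V'_L$ is a non-empty torsor under the finite group $S_L$ of such automorphisms of $V'_L$, namely $S_L=\prod\mu_2$ over the $\pi$ factoring through $\Gal(L/\Q)$; restriction to invariants gives maps $X_{L'}\to X_L$ covering the surjections $S_{L'}\twoheadrightarrow S_L$, and since $\mathcal L$ admits a cofinal chain and torsor maps over surjections are surjective, $\varprojlim_LX_L\neq\varnothing$. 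A compatible family $(\phi_L)$ assembles to a $G_\Q$-equivariant structure-preserving isomorphism $V_\infty\to V'_\infty$, whose restrictions to $G_K$-invariants give a natural isomorphism $V\cong V'$. For 2): a natural automorphism of $V$ is, via the reconstruction, the same as a $G_\Q$-equivariant $\nVec^\sharp_\C$-automorphism of $V_\infty$; by multiplicity one and the lemma this is exactly a choice of sign on each $W_\pi$, i.e.\ an element of $\varprojlim_LS_L=\prod_{\pi\in\{\pi\}}\mu_2=\Map(\{\pi\},\mu_2)$, with composition the group law. The lemma is elementary, and the inverse-limit argument is routine once $\mathcal L$ is known to be countable; all the weight is on the reconstruction step.
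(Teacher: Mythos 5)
Your proposal is correct and follows essentially the same route as the paper: reduce to embedded number fields, reconstruct $V$ from the $G_\Q$-representation $V_\infty=\colim_L V_L$ with its induced $\cup$ and $*$, decompose into multiplicity-one isotypical pieces using property~3, and apply Schur-type rigidity to get both the isomorphism and the identification $\Aut(V)\cong\Map(\{\pi\},\mu_2)$. The minor differences are cosmetic: the paper's Proposition~\ref{t3.1} only asserts uniqueness of the star operator given $\cup$, while you prove the slightly stronger statement that the whole triple $(W,\cup,*)$ is unique up to isomorphism with automorphism group $\mu_2$ (which makes the ``any two constructions with different choices of $\cup_\pi$ are isomorphic'' step fully explicit), and you replace the paper's terse ``$V$ is recovered from $V(\oQ)$'' conclusion with an explicit inverse limit of torsors over a countable cofinal chain of Galois extensions --- a more careful packaging of the same gluing idea.
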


\textbf{Remark} For a tame Galois extension $K / \Q$ with group $G$, the symplectic irreducible representations with root number $-1$ are the obstructions for the projective $\Z [G]$-module $\eo_K$ to be zero in the stable class group $Cl (\Z [G])$, \cite{T}. This could be a hint that the Galois module structure of $\eo_K$ may play a role in a \textit{natural} construction of a functor $V$ as in Theorem \ref{t2.15}, a construction not depending on choices as in our purely formal existence proof below.
\section{Proofs} \label{sec:3}

We need the following elementary result:

\begin{prop}
\label{t3.1}
Let $V$ be a finite dimensional irreducible complex representation of a finite group $G$ equipped with a $G$-invariant symplectic pairing $\cup : V \times V \to \C$. Then there is a unique star operator $*$ on $V$ such that $\langle v, w \rangle = v \cup *w$ for $v,w \in V$ defines a hermitian scalar product on $V$.
\end{prop}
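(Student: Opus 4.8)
The plan is to manufacture $*$ by transporting a $G$-invariant Hermitian inner product across the symplectic form and then pinning it down with Schur's lemma, and to derive uniqueness from Schur as well.

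\emph{Existence.} Since $G$ is finite, averaging any Hermitian inner product over $G$ gives a $G$-invariant scalar product $h_0$ on $V$. As $h_0$ and $\cup$ are both nondegenerate there is a unique antilinear bijection $J\colon V\to V$ with $v\cup w=h_0(v,Jw)$ for all $v,w\in V$, and $G$-invariance of $h_0$ and $\cup$ forces $J$ to be $G$-equivariant. Then $J^2$ is a $\C$-linear $G$-endomorphism of the irreducible $V$, so $J^2=\lambda\cdot\id$ for some $\lambda\in\C^\times$ by Schur. The crucial point is that $\lambda<0$: the form $(v,w)\mapsto\overline{h_0(Jv,Jw)}$ is again a $G$-invariant scalar product (positive since $J$ is bijective, Hermitian and $G$-invariant by conjugating and by equivariance of $J$), hence equals $c\,h_0$ with $c>0$, and feeding the resulting identity $h_0(Jv,Jw)=c\,h_0(w,v)$ back into $v\cup w=h_0(v,Jw)$ together with the alternation of $\cup$ and the Hermitian symmetry of $h_0$ yields $\lambda=-c$. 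The operator $*:=-c^{-1/2}J$ is then antilinear and $G$-equivariant with $*^2=c^{-1}J^2=-\id$, and
\[
v\cup *w=-c^{-1/2}\,h_0(v,J^2w)=c^{1/2}\,h_0(v,w),
\]
so $\langle v,w\rangle:=v\cup *w$ is a positive multiple of $h_0$, in particular a scalar product.

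\emph{Uniqueness.} Let $*_1,*_2$ be $G$-equivariant star operators with $h_i(v,w):=v\cup *_iw$ a scalar product. Then $u:=*_2\circ *_1^{-1}=-*_2\circ *_1$ is a $\C$-linear $G$-endomorphism of the irreducible $V$, so $u=\mu\cdot\id$ by Schur and $*_2=\mu *_1$ with $\mu\in\C^\times$, whence $h_2=\mu h_1$. Requiring $h_2$ to be Hermitian forces $\mu\in\R$, requiring it positive definite forces $\mu>0$, and $*_2^2=-\mu^2\,\id=-\id$ forces $\mu=1$; hence $*_2=*_1$.

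The main obstacle is the sign in the Schur step, i.e.\ showing $J^2$ is a \emph{negative} scalar: this is precisely where the alternation of $\cup$ is used (a symmetric $\cup$ would give $\lambda>0$ and a real structure instead), and it is what makes $*$ a genuine star operator with $*^2=-1$ rather than an involution. A second point to flag is that uniqueness genuinely needs $*$ to commute with $G$ (equivalently, that the scalar product $v\cup *w$ be $G$-invariant): without it Schur no longer applies to $u$, and indeed for the two-dimensional irreducible representation of the quaternion group $Q_8$ every operator $-\,\diag(t,t^{-1})\circ J$ with $t>0$ satisfies the conclusion. So the proposition is to be read with $*$ a morphism of representations, which is what the construction produces and what the application to Theorem \ref{t2.15} requires.
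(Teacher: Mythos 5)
Your argument follows the same route as the paper's: start from a $G$-invariant Hermitian inner product, transport it across the symplectic form by an antilinear operator, invoke Schur to see that the square of that operator is a scalar, check that the scalar is negative, and rescale; uniqueness is the same Schur-plus-positivity computation. The only cosmetic differences are that the paper defines $*$ directly by the equation $\langle v,w\rangle = v\cup *w$ (so $*$ is, up to sign and normalization, your $J$) and obtains negativity of $\lambda$ by the one-line computation $0<\langle *v,*v\rangle = *v\cup *^2v = -\lambda\langle v,v\rangle$, whereas you introduce a second auxiliary Schur step to produce the constant $c$; both are correct, the paper's is marginally shorter. (In fact your own identity $h_0(Jv,Jw)=-\lambda\,h_0(w,v)$, obtained by alternation and Hermitian symmetry alone, already gives $\lambda<0$ without the extra Schur argument.)

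Your closing caveat is well taken and worth stressing: as literally worded, the proposition does not say that $*$ should be $G$-equivariant, yet the paper's uniqueness proof begins with ``the composition $*_1\circ *_2^{-1}$ is a $G$-equivariant\ldots'', which silently presupposes exactly that. Your $Q_8$ family $\,*_t=-\,\mathrm{diag}(t,t^{-1})\circ J$, $t>0$, shows the presupposition is not harmless: each $*_t$ is a star operator for which $v\cup *_t w$ is a scalar product, but only $t=1$ gives the $G$-invariant one. So the proposition should be read (and in the application to $V_\pi$ is read) as asserting the existence and uniqueness of a $G$-\emph{equivariant} star operator, equivalently one whose associated scalar product is $G$-invariant; this is precisely what the existence construction produces.
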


\begin{proof}
If $*_1$ and $*_2$ are two such star operators, the composition $*_1 \verk *^{-1}_2$ is a $G$-equivariant $\C$-linear endomorphism of $V$, hence a scalar by Schur's Lemma. Thus $*_1 = \mu *_2$ for some $\mu \in \C$. We have $|\mu| = 1$ since
\[
-1 = *^2_1 = (\mu *_2)^2 = |\mu|^2 *^2_2 = - |\mu|^2 \; .
\]
On the other hand we have $\mu > 0$ because of the relations $0 < v \cup *_1 v = \mu (v \cup *_2 v)$ and $0 < v \cup *_2 v$ for $0 \neq v \in V$. Thus $\mu = 1$ and uniqueness follows. \\
For existence, choose a $G$-invariant hermitian scalar product $\langle , \rangle$ on $V$ and define an anti-linear $G$-equivariant automorphism $*$ of $V$ by the formula
\[
\langle v,w \rangle = v \cup *w \quad \text{for} \; v,w \in V \; .
\]
Then $*^2$ is a $G$-equivariant $\C$-linear endomorphism of $V$ and hence a scalar, $* = \lambda \, \id$. For $0 \neq v \in V$ we have $* v \neq 0$ as well and therefore
\[
0 < \langle *v , *v \rangle = *v \cup *^2v = - \lambda v \cup *v = - \lambda \langle v,v \rangle \; .
\]
It follows that $\lambda < 0$ and replacing $*$ by $|\lambda|^{-1/2} *$ we get a star operator as in the proposition. 
\end{proof}

\begin{proofof} \textbf{Theorem \ref{t2.15}} 
As before, let $\oQ$ be the algebraic closure of $\Q$ in $\C$ and let $\Nh_e$ be the category of embedded subfields $K \subset \oQ$ which are finite extensions of $\Q$. A morphism from $K \subset \oQ$ to $L \subset \oQ$ is a homomorphism of fields $K \to L$. It does not have to be compatible with the inclusions of $K$ and $L$ into $\C$. The forgetful functor $\Nh_e \to \Nh$ is an equivalence of categories. Choosing a quasi-inverse, it suffices to prove Theorem \ref{t2.15} with $\Nh$ replaced by $\Nh_e$. Note that if we have two homomorphisms $\alpha_1 : K \hookrightarrow L$ and $\alpha_2 : K \hookrightarrow L$ then $L$ is Galois over $\alpha_1 (K)$ if and only if it is Galois over $\alpha_2 (K)$. We leave the notations as before but from now on, every field $K$ is a subfield of $\oQ$ and therefore equipped with its inclusion map $K \subset \oQ$. For $K \subset \oQ$ let $G_K = \Gal (\oQ / K)$ be the corresponding open subgroup of $G_{\Q} = \Gal (\oQ / \Q)$. For every $\pi$ in the set $\Pi$ defined above choose a representing vector space $V_{\pi}$ and a $G_{\Q}$-invariant symplectic form $\cup_{\pi} : V_{\pi} \times V_{\pi} \to \C$. We equip the complex vector space $V_{\oQ} = \bigoplus_{\pi \in \Pi} V_{\pi}$ with the alternating form $\cup : V_{\oQ} \times V_{\oQ} \to \C$ which is the orthogonal direct sum of the $\cup_{\pi}$'s. The group $G_{\Q}$ acts with finite orbits on $V_{\oQ}$. Equivalently, the representation of $G_{\Q}$ on $V_{\oQ}$ is smooth in the sense that $V_{\oQ} = \bigcup_{K} (V_{\oQ})^{G_K}$ where $K \in \Nh_e$. Moreover, the representation is also admissible, meaning that $V_K = (V_{\oQ})^{G_K}$ is finite dimensional for all $K$ in $\Nh_e$. To see this, we may assume that $K / \Q$ is Galois. Then $G_K$ is a normal subgroup of $G_{\Q}$ and hence $V^{G_K}_{\pi}$ is a $G_{\Q}$-invariant subspace of $V_{\pi}$. Hence we have $V^{G_K}_{\pi} = 0$ unless $G_K$ acts trivially on $V_{\pi}$ in which case $V^{G_K}_{\pi} = V_{\pi}$. Hence we have
\begin{equation}
\label{eq:11}
V_K = \bigoplus_{ \pi \in \Pi_K} V_{\pi} \; .
\end{equation}
Here $\Pi_K \subset \Pi$ consists of those symplectic $\pi$ with $W (\pi) = -1$ that factor over $\Gal (K / \Q) = G_{\Q} / G_K$. In particular, $V_K$ is finite dimensional and satisfies property 3) in Theorem \ref{t2.15}. The restriction $\cup_K$ of $\cup$ to $V_K \times V_K$ is a symplectic pairing since it is the orthogonal direct sum of the symplectic pairings $\cup_{\pi}$ for $\pi$ in $\Pi_K$. 

Applying Proposition \ref{t3.1} to $(V_{\pi} , \cup_{\pi})$ we obtain a $G_{\Q}$-equivariant star operator $*_{\pi}$ on $V_{\pi}$. The direct sum of these operators gives a $G_{\Q}$-equivariant $*$-operator on $V_{\oQ}$ for which $\langle v_1 , v_2 \rangle = v_1 \cup *v_2$ is a hermitian scalar product on $V_{\oQ}$. Its restriction to $V_K$ is denoted by $*_K$ and it equals the direct sum of the $*_{\pi}$ for $\pi \in \Pi_K$. 

We can now construct a functor $V: \Nh_e \to \Vh^{\sharp}$. For $K$ an object of $\Nh_e$, an embedded subfield $K \subset \oQ$ we set
\[
V (K) = (V_K , \cup_K , *_K) \; .
\]
For a morphism in $\Nh_e$, $\alpha : K \to L$ choose a prolongation of $\alpha$ to a homomorphism $\oalpha : \oQ \to \oQ$ along the given embeddings of $K$ and $L$ into $\oQ$. This gives a commutative diagram
\[
\xymatrix{
\oQ \ar[r]^{\oalpha} \ar@{}[d]|{\cup} & \oQ \ar@{}[d]|{\cup}\\
K \ar[r]^{\alpha} & L \; .
}
\]
The injection $\oalpha$ is actually an automorphism, $\oalpha \in G_{\Q}$. It induces an automorphism $\oalpha : V_{\oQ} \to V_{\oQ}$ which maps $V_K$ into $V_L$ since $\oalpha^{-1} G_L \oalpha \subset G_K$. The induced $\C$-linear map
\[
V (\alpha) = \oalpha \, |_{V_K} : V_K \longrightarrow V_L
\]
depends only on $\alpha$ and not on the choice of $\oalpha$: If $\ooalpha$ is another prolongation, then $\oalpha^{-1} \verk \ooalpha$ fixes $K$ and hence $\ooalpha = \oalpha \verk \sigma$ for some $\sigma \in G_K$. Thus $\ooalpha \, |_{V_K} = \oalpha \, |_{V_K}$. The alternating form $\cup$ respectively the star operator $*$ on $V_{\oQ}$ are $G_{\Q}$-invariant respectively equivariant. Since $\cup_K , \cup_L$ and $*_K , *_L$ are the restrictions of $\cup$ and $*$ to $V_K$ respectively $V_L$ it follows that for $v_1 , v_2 , v \in V_K$ we have
\[
\oalpha (v_1) \cup_L \oalpha (v_2) = v_1 \cup_K v_2 \quad \text{and} \quad *_L (\oalpha (v)) = \oalpha (*_K (v)) \; .
\]
This means that $V (\alpha)$ is a morphism from $V (K)$ to $V (L)$ in $\Vh^{\sharp}$. It is clear that $V : \Nh_e \to \Vh^{\sharp}$ is a covariant functor. The adjoint map $\tV (\alpha) : V_L \to V_K$ to $V (\alpha) : V_K \to V_L$ is defined by 
\[
V (\alpha) v \cup_L w = v \cup_K \tV (\alpha) w \quad \text{for} \; v \in V_K \, , \, w \in V_L \; . 
\]
Hence, using the above compatibility of $\oalpha$ with $\cup$, we have
\[
v \cup_K v' = V (\alpha) v \cup_L V (\alpha) v' = v \cup_K (\tV (\alpha) \verk V (\alpha)) v' \quad \text{for} \; v , v' \in V_K \; .
\]
This implies that
\[
\tV (\alpha) \verk V (\alpha) = \id \; .
\]
Thus if $\alpha' : K \silo K'$ is an isomorphism we have $\tV (\alpha') = V (\alpha')^{-1}$ and hence $V (\alpha') \verk \tV (\alpha') = \id$ as well. Any embedding $\alpha : K \hookrightarrow L$ can be factored as $\alpha : K \xrightarrow{\alpha'} \alpha (K) \overset{i}{\hookrightarrow} L$ where $i$ is the inclusion and $\alpha' = \alpha$ with the new image $\alpha (K)$ instead of $L$. Since $\alpha'$ is an isomorphism we get
\[
V (\alpha) \verk \tV (\alpha) = V (i) \verk V (\alpha') \verk \tV (\alpha') \verk \tV (i) = V (i) \verk \tV (i) \; .
\]
In order to show that
\begin{equation}
\label{eq:12}
V (\alpha) \verk \tV (\alpha) = \frac{1}{[L : K]} \sum_{\sigma \in G} V (\sigma) =: V (e)
\end{equation}
if $L / K$ is Galois we may therefore assume that $\alpha = i$ is compatible with the given embeddings into $\oQ$, so that we have a commutative diagram
\[
\xymatrix{
K \ar@{^{(}->}[r]^i \ar@{}[d]|{\cap} & L \ar@{}[d]|{\cap}\\
\oQ \ar@{=}[r] & \oQ \; .
}
\]
Then $V (i)$ and $\tV (i)$ have the following easy description. We have $\Pi_L = \Pi_K \cupdot \Pi_{L \setminus K}$ where $\Pi_{L \setminus K}$ consists of those $\pi$ in $\Pi_L$ with $G_K \subsetneqq \Ker \pi$, hence those for which the action of $G_{\Q}$ on $V_{\pi}$ factors over $G = \Gal (L / K)$ and is non-trivial. Setting $W = \bigoplus_{\pi \in \Pi_{L \setminus K}} V_{\pi}$ the map
\[
V (i) : V_K \hookrightarrow V_L = V_K \oplus W
\]
is the inclusion $v \mapsto (v , 0)$. The symplectic form $\cup_L$ is the orthogonal direct sum of $\cup_K$ and $\cup_{L \setminus K}$ the latter being the orthogonal direct sum of the $\cup_{\pi}$ for $\pi \in \Pi_{L \setminus K}$. We have a commutative diagram
\[
\xymatrix{
V_L \ar[d]_{\tV (i)} \ar@{=}[r] & V_K \oplus W \ar[rr]^{\cup_L = \cup_K \oplus \cup_{L \setminus K}} & & V^*_K \oplus W^* \ar@{->>}[d]^{V (i)^*} \\
V_K \ar[rrr]^{\cup_K} & & & V^*_K \; .
}
\]
The dual $V (i)^*$ of the inclusion $V (i)$ is the projection to $V^*_K$. Hence \\
$\tV (i) : V_L = V_K \oplus W \to V_K$ is the projection and we have
\[
(V (i) \verk \tV (i)) (v,w) = (v, 0) \quad \text{for} \; v \in V_K , w \in W \; .
\]
It remains to show that $V (e) (v,w) = (v, 0)$, where $V (e) \in \End (V_L)$ was defined in \eqref{eq:12}. On each $V_{\pi}$ for $\pi$ in $\Pi_L$ the endomorphism $V (e)$ is a projector to $V^G_{\pi}$. Hence $V (e)$ is the identity on $V_K$ and the zero map on $W$. Note that $V^G_{\pi} \neq V_{\pi}$ for $\pi \in \Pi_{L\setminus K}$ and hence $V^G_{\pi} = 0$ since $\pi$ is irreducible. 
\end{proofof}

\begin{proofof} \textbf{Theorem \ref{t2.17}}
Since $\Nh_e \to \Nh$ is an equivalence of categories, it suffices to show that any two functors $V , V' : \Nh_e \to \nVec^{\sharp}_{\C}$ satisfying conditions 1)--3) in Theorem \ref{t2.15} are isomorphic and have automorphism groups isomorphic to $\Map (\Pi , \mu_2)$. For such a functor $V : \Nh_e \to \nVec^{\sharp}_{\C}$ consider the filtered  colimit in $\Ind \nVec^{\sharp}_{\C}$:
\begin{equation}
\label{eq:13}
V (\oQ) = \colim_{K \subset \oQ} V (K) \; .
\end{equation}
Here the index poset consists of the objects $(K \subset \oQ)$ of $\Nh_e$ with $K / \Q$ Galois, ordered by those homomorphisms $i : K \hookrightarrow L$ which are compatible with the inclusions $K \subset \oQ$ and $L \subset \oQ$. The transition maps are $V (i) : V (K) \to V (L)$. Since $V$ is a functor, the group $G_{\Q}$ acts on the object $V (\oQ)$. The $\ind$-category $\Ind \nVec^{\sharp}_{\C}$ can be identified with the category of complex vector spaces with a non-degenerate alternating pairing $\cup$ and a $*$-operator for which $(v_1 , v_2) \mapsto v_1 \cup * v_2$ is a hermitian scalar product. Thus we may write $V (\oQ) = (V_{\oQ} , \cup , *)$ where $V_{\oQ}$ is the underlying vector space of the object $V (\oQ)$ in $\Ind \nVec^{\sharp}_{\C}$. The action of $G_{\Q}$ on $V_{\oQ}$ has finite orbits. All transition maps $V (i)$ are injective because of condition 1) in Theorem \ref{t2.15}. For any subfield $K$ in $\Nh_e$ we therefore have a natural inclusion $V_K \hookrightarrow V_{\oQ}$. By functoriality it is $G_K$-equivariant and hence
\begin{equation}
\label{eq:14}
V_K \hookrightarrow V^{G_K}_{\oQ} = \colim_{L \subset \oQ} V^{G_K}_L \; .
\end{equation}
In the colimit we may restrict to extension fields $K \subset L \subset \oQ$ of $K \subset \oQ$ which are Galois over $\Q$ and we set $G = \Gal (L / \Q)$. Properties 1) and 2) in Theorem \ref{t2.15} for $V : \Nh_e \to \nVec^{\sharp}_{\C}$ imply that the homomorphism $i : K \hookrightarrow L$ induces an isomorphism $V_K = V^{G_K}_L$. Namely, by property 1) the map $\tV (i)$ is surjective and we get
\[
\Imm V (i) = \Imm V (i) \verk \tV (i) \overset{2)}{=} V^G_L = V^{G_K}_L \; .
\]
Hence \eqref{eq:14} is an isomorphism of complex vector spaces. Since the colimit \eqref{eq:13} was taken in $\Ind \nVec^{\sharp}_{\C}$ it follows that the functor $V : \Nh_e \to \nVec^{\sharp}_{\C}$ is canonically isomorphic to the functor sending $K \subset \oQ$ to $V^{G_K}_{\oQ}$ equipped with the restrictions of $\cup$ and $*$ of $V_{\oQ}$. The fact that the restriction of $\cup$ remains non-degenerate can be seen directly because the hermitian scalar product $\_\_ \cup * \_\_ $ remains a hermitian scalar product after restriction. Next we note that the representation of $G_{\Q}$ on $V_{\oQ}$ is smooth by construction and admissible because $V^{G_K}_{\oQ} = V_K$ is finite dimensional for all $K$. It follows that $V_{\oQ}$ is the direct sum of irreducible representations each occuring with finite multiplicity \cite[II.1.5. Proposition]{C}. All these multiplicities have to be one because otherwise we would find a Galois extension $K / \Q$ in $\Nh_e$ for which $V_K = V^{G_K}_{\oQ}$ has an irreducible $G = G_{\Q} / G_K$-representation of multiplicity at least $2$ contradicting condition 3) in Theorem \ref{t2.15}. Again using 3) we see that there is an isomorphism
\[
\varepsilon : V_{\oQ} \cong \bigoplus_{\pi \in \Pi} V_{\pi}
\]
as $G_{\Q}$-representations where the set $\Pi$ was defined before Theorem \ref{t2.17}. For the symplectic form $\cup_{\pi}$ on $V_{\pi}$ we take the one corresponding to the restriction $\cup : V_{\oQ} \times V_{\oQ} \to \C$ to $V_{\oQ} (\pi) \times V_{\oQ} (\pi)$, noting that $\varepsilon$ induces an isomorphism $\varepsilon : V_{\oQ} (\pi) \silo V_{\pi}$. We can transport the star operator on $V_{\oQ}$ via $\varepsilon$ or note that by Proposition \ref{t3.1} it is already uniquely determined by the $\cup_{\pi}$'s. Since the functor $V$ can be recovered from $V (\oQ)$ in $\Ind \nVec^{\sharp}_{\C}$ with the $G_{\Q}$-action it follows that $V$ is isomorphic to a functor of the type constructed in the proof of Theorem \ref{t2.15}. Hence all functors $V$ are isomorphic. Any automorphism  of $V$ gives rise to a $G_{\Q}$-equivariant automorphism of $V_{\oQ}$ which has to respect the $\pi$-isotypical components. Since the latter are irreducible the automorphism acts by scalar multiplication on them. This scalar $\varphi (\pi)$ has to be $\pm 1$ since the symplectic form is preserved. Hence any automorphism of $V$ is determined by a map $\varphi : \Pi \to \mu_2$ and we obtain an injective homomorphism of groups $\Aut (V) \to \Map (\Pi , \mu_2)$. \\
On the other hand, a map $\varphi : \Pi \to \mu_2$ induces a $G_{\Q}$-equivariant automorphism $\varphi$ of the triple $V (\oQ)$. Any endomorphism $f$ of the functor $V$ induces a $G_{\Q}$-equivariant endomorphism of $V_{\oQ}$. It therefore respects the isotypical components of $V_{\oQ}$ and since $\varphi$ acts on these by multiplication with $\pm 1$ it follows that on $V_{\oQ}$ the endomorphism $f$ commutes with the automorphism $\varphi$. The same is true after restriction to $V_K = V^{G_K}_{\oQ}$. Hence $\varphi$ gives a natural transformation $V \to V$ and hence an automorphism of $V$. It follows that the map $\Aut (V) \to \Map (\Pi , \mu_2)$ is also surjective.
\end{proofof}
\section{Further remarks} \label{sec:4}

We have discussed hypothetical cohomology theories for $\Yh_K = \overline{\spec} \eo_K$. On the one hand the groups $H^i (\Yh_K , \Ch)$ with operator $\theta$ and on the other hand in \ref{t2.13} the groups $H^i_{\Mh} (\Yh_K , 1/2)$ and the $\C$-vector spaces $H^i_{\Mh} (\Yh_K , \C (1/2))$. In the paper \cite{D1} for every normal scheme $\eX$ of finite type over $\spec \Z$ a connected topological dynamical system $X = \ceX (\C) \times_{\Q^{> 0}} \R^{> 0}$ was constructed. Here $\ceX (\C)$ is a topological space with an action of $(\Q^{> 0} , \cdot)$ where we think of the action by $p \in \Q^{> 0}$ as a Frobenius at $p$. The group $\R$ acts on $X$ by multiplication via $\exp$ on the second factor. The closed orbits of the $\R$-action on $X$ are in a correspondence (many to one) with the closed points of $\eX$. The space $X$ is equipped with the sheaf $\Ch$ of continuous $\C$-valued functions on $X$ which are smooth in the $\R^{> 0}$-coordinate and locally constant in the $\ceX (\C)$-coordinate. We can consider the sheaf cohomology groups $H^i (X , \Ch)$ with the induced $\R$-action. For $\eX = \spec \eo_K$ these groups together with the infinitesimal generator $\theta$ of the $\R$-action are the best approximation to the conjectured groups $H^i (\Yh_K , \Ch)$ with operator $\theta$ that we can presently produce. However as explained in \cite{D1} our dynamical systems $X$ and hence their sheaf cohomology need to be improved. Thus we have no good candidate for the $1 / 2$-eigenspace $H^1 (\Yh_K , \Ch)^{\theta = 1/2}$ at the moment. 

We now consider the second speculative cohomology group $H^1 (\Yh_K , 1/2)$ which was discussed in section \ref{sec:2}. For elliptic curves $E / \Q$ the corresponding group is the motivic cohomology group $H^2_{\Mh} (E , \Q (1)) = CH^1 (E)^0 \otimes \Q = E (\Q) \otimes \Q$, because the Birch and Swinnerton-Dyer conjecture asserts that $\rk E (\Q) = \ord_{s = 1} L (E,s)$. In a suitable category of motivic sheaves on $\Yh_K$, we expect an equality of the form $H^2_{\Mh} (E , \Q (1)) = H^1_{\Mh} (\Yh_K , j_{!*} H^1 (E) (1))$ where $j : \spec K \hookrightarrow \Yh_K$ is the inclusion. Correspondingly we think of $H^1_{\Mh} (\Yh_K , 1/2)$ as $H^1 (\Yh_K , j_{!*} H^0 (\spec K) (1/2))$ in an extended category of motivic sheaves over $\Yh_K$ which allow half-integer Tate twists. 

One might try to realize $H^1_{\Mh} (\Yh_K , 1/2)$ as 
\[
\Ext^1_{M^{\exp}_{\Yh_K}} (\Q (0) , \Q (1/2)) \subset \Ext^1_{M^{\exp} (K)} (\Q (0) , \Q (1/2)) \; .
\]
Here $M^{\exp}_{\Yh_K}$ is a full $\Q$-linear subcategory of ``integral'' exponential motives in the $\Q$-linear neutral Tannakian category of exponential motives $M^{\exp} (K)$ over $K$, \cite{FJ}, in particular Ch. 12. Moreover $\Q (1/2)$ is the dual of the exponential motive $\Q (-1/2) = (\A^1_K , f = x^2)$ as defined in \cite[12.2]{FJ}. The expected $\cup$-product should be the Yoneda pairing of this group with itself with values in $\Ext^2_{M_{\Yh_K}} (\Q (0) , \Q (1))$, the $2$-extensions of classical motives which are integral over $\Yh_K$, \cite{DN}. Note that classical motives embed as a full abelian subcategory of exponential motives but their essential image is not stable under extensions. Conditionally this target group has a natural map ``$cl^{-1}$'' to the Arakelov Chow group $CH^1 (\Yh_K) \otimes \Q$ and hence to $\R$, \cite{DN}. There are at least three problems with this idea: \\
1) The exponential motive $\Q (-1/2)$ has square $\Q (-1/2)^{\otimes 2}_K = (\A^2_K , f = x^2_1 + x^2_2)$ in $M^{\exp} (K)$. If the field $K$ contains $i = \sqrt{-1}$, then we indeed obtain $\Q (-1)$. If not, the square of $\Q (-1/2)$ is not $\Q (-1)$ but $M_{\chi} \otimes \Q (-1)$ where $M_{\chi}$ is the motive of the non-trivial character $\chi$ of the quadratic extension $K(i) / K$. Now, according to \cite{O} there are Galois extensions $K / \Q$ with $G = Q_8$ the quaternion group whose unique irreducible symplectic representation $\pi$ has $W (\pi) = -1$. Moreover, a theorem of Witt at the end of \cite{W} describes the quadratic subfields of $Q_8$-extensions $K$ precisely, and they are all real-quadratic. Thus for such $K$ which should have a non-zero $H^1_{\Mh} (\Yh_K , 1/2)$ we do not have a candidate for $\Q (1/2)$ in $M^{\exp} (K)$. \\
2) The extension groups of exponential motives are $\Q$-vector spaces. However, for any $K / \Q$ Galois the $\C$-vector space $H^1_{\Mh} (\Yh_K , \C (1/2))$ derived from $H^1_{\Mh} (\Yh_K , 1/2)$ if non-zero cannot even have a functorial real structure if Prediction \ref{t2.14} is true. This follows from the argument in the proof of \cite{D2} Theorem 2.1. For quaternion extensions it is clear because by 3) of Prediction \ref{t2.14} we would have $H^1_{\Mh} (\Yh_K , \C (1/2)) \cong V_{\pi}$ as a $\C [G]$-module and it is known that irreducible symplectic representations cannot be realized over $\R$. \\
3) Finally, Fres\'an and Jossen proved in Theorem \ref{t4.1} below that the group \\
$\Ext^1_{M^{\exp}(K)} (\Q (0) , \Q (1/2))$ and hence any subgroup $\Ext^1_{M^{\exp}_{M_{\Yh_K}}} (\Q (0) , \Q (1/2))$ vanish. 

It is conceivable that a ``twisted'' version of the category of exponential motives resolves the three problematic issues: $\sqrt{-1} \notin K$, the $\Q$-structure and the vanishing of 
\[
\Ext^1_{M^{\exp} (K)} (\Q (0), \Q (1/2)) \; .
\] 
In any case, guessing a natural space whose dimension is $\ord_{s = 1/2} \zeta_K (s)$ assuming Serre's conjecture remains a challenge even though by Prediction \ref{t2.14} and Theorems \ref{t2.15} and \ref{t2.17} we know its structure abstractly. As for predicting the leading coefficient $\zeta^*_K (1/2)$, the expected positive definite form $\langle , \rangle = \_\_ \cup* \_\_$, an analogue of the height pairing on elliptic curves might play a role. One can also ask if there is a Zagier type conjecture involving values at algebraic integers $z$ of the polylogarithm function $\Li_s (z)$ for $s = 1/2$. 

\begin{theorem}[J. Fres\'an and P. Jossen] \label{t4.1} 
For any subfield $K \subset \C$, we have
\[
\Ext^1_{M^{\exp} (K)} (\Q (0) , \Q (1/2)) = 0 \; .
\]
\end{theorem}

\begin{proof} (from a letter by Fres\'an to the author)
In the first part of the argument, one proves that in the category $\underline{\Perv}_0$ of perverse sheaves with
vanishing cohomology on the complex affine line, every extension
\[
0 \longrightarrow \Q (1/2) \longrightarrow A \longrightarrow \Q(0) \longrightarrow 0
\]
is split. We slightly abuse notation here, using the same symbols $\Q (0)$ and $\Q(1/2)$ for the perverse realisations of these exponential motives. In particular, $\Q (0)$ is denoted by $E(0)$ in \cite{FJ}; it is the perverse sheaf $j_! \Q [1]$, where $j : \Ge_m \hookrightarrow \A^1$ stands for the inclusion. The perverse realisation of $\Q (1/2)$ is the perverse sheaf $j_! L [1]$, where $L$ now stands for the one-dimensional local system on $\Ge_m$ with monodromy $-1$ around $0$ (indeed, the perverse realisation of $H^1 (\A^1 , x^2)$ is obtained by taking the direct image by the function $x \mapsto x^2$ of the constant sheaf $\Q$ on $\A^1$ and quotienting by the copy of $\Q$ inside this direct image; this is what produces the monodromy $-1$). So, what do we know about our perverse sheaf $A$ in the middle of the exact sequence? It has dimension $2$ (in the tannakian sense, meaning generic rank here) and the only singularity is at $0$, where the fibre vanishes (as this is the case for both $\Q (1/2)$ and $\Q (0)$). So the object is entirely determined by its monodromy around $0$, which is a group morphism $\rho : \pi_1 (\C^{\times} , 1) \to \GL_2 (\Q)$. On a basis of the fibre of $A$ at $1$ that is adapted to the exact sequence, the image of the standard generator of the fundamental group of $\C^{\times}$ is a matrix of the form $\left( \begin{smallmatrix} -1 & * \\ 0 & 1 \end{smallmatrix} \right)$, where on the diagonal we find the monodromy on the graded pieces and the entry $*$ gives the class of the extension. But because the eigenvalues are rational and different, all these matrices are diagonalisable over $\Q$, which means that the local system
underlying $A$ is isomorphic to the direct sum of the two rank-one local systems, and hence $A$ itself is isomorphic to the direct sum of $\Q (1/2)$ and $\Q(0)$.

Let us now consider an extension 
\[
0 \longrightarrow \Q (1/2) \longrightarrow M \longrightarrow \Q (0) \longrightarrow 0
\]
in the category of exponential motives over a subfield $K$ of the complex numbers, and let $A = R_{\perv} (M )$ be the perverse realisation of $M$. By the first part of the argument, the splitting of the exact sequence of perverse realisations shows that $A$ contains $\Q (0)$ as the largest trivial (in the Tannakian sense) subobject. We can therefore apply the \textit{theorem of the fixed part} (Theorem 6.5.1 in \cite{FJ}), which is the following statement:
\begin{quote}
Let $M$ be an exponential motive with perverse realisation $A$, and denote by $A_0 \subseteq A$ the largest trivial subobject of $A$. There exists a classical submotive $M_0$ of $M$ such that the image of the perverse realisation of $M_0$ in $A$ is equal to $A_0$.
\end{quote}
In the case at hand, we deduce that $M$ contains a classical one-dimensional submotive $M_0$ such that the composition $M_0 \hookrightarrow M \to \Q (0)$ induces the identity on perverse realisations. Hence, $M_0$ is isomorphic to $\Q (0)$ and we obtain a section of the exact sequence. 
\end{proof}

\end{document}